\newtheorem{lemma}{Lemma}[section]
\newtheorem{theorem}[lemma]{Theorem}
\begin{document}
\title[Energy density growth for the Schr\"odinger map and the binormal flow]{Unbounded growth of the energy density associated to the Schr\"odinger map and the binormal flow}
\author[V. Banica]{Valeria Banica}
    \address[V. Banica]{Sorbonne Universit\'e, CNRS, Universit\'e de Paris, Laboratoire Jacques-Louis Lions (LJLL), F-75005 Paris, France, and Institut Universitaire de France (IUF)} 
\email{Valeria.Banica@ljll.math.upmc.fr}
\author[L. Vega]{Luis Vega}
\address[L. Vega]{BCAM-UPV/EHU Bilbao, Spain, luis.vega@ehu.es} 
\email{lvega@bcamath.org}

\maketitle

\begin{abstract}
We consider the binormal flow equation, which is a model for the dynamics of vortex filaments in Euler equations. Geometrically it is a flow of curves in three dimensions, explicitly connected to the 1-D Schr\"odinger map with values on the 2-D sphere, and to the 1-D cubic Schr\"odinger equation. Although these equations are completely integrable we show the existence of an unbounded growth of the energy density. The density is given by the amplitude of the high frequencies of the derivative of the tangent vectors of the curves, thus giving information of the oscillation at small scales. In the setting of vortex filaments the variation of the tangent vectors is related to the derivative of the direction of the vorticity, that according to the Constantin-Fefferman-Majda criterion plays a relevant role in the possible development of singularities for Euler equations. \end{abstract}

\section{Introduction}


In this paper we show the existence of an unbounded flux of the energy density of the solutions of two related partial differential equations which are completely integrable and have a connection with fluid mechanics. The first equation is the Schr\"odinger map in one dimension with values on the 2-D sphere,  known in the physics literature as the classical continuous Heisenberg chain model  in ferromagnetism:
\begin{equation}\label{SM}
T_t=T\times T_{xx}.
\end{equation} 
Here $t$ will represent time and $x$ the spatial variable. Equation \eqref{SM} can be also written in divergence form. This is due to the fact that \eqref{SM} can be obtained by simple differentiation in the spatial variable from the following second equation on curves $\chi(t,x)$ in $\mathbb R^3$:
\begin{equation}\label{bf}
\chi_t=\chi_x\times\chi_{xx}\qquad \chi_x=T, \, |T|=1. 
\end{equation}
This latter equation, known as the Localized Induction Approximation (LIA), and also as the Vortex Filament Equation (VFE) and as the binormal flow (BF), appears naturally as a formal approximation (see \cite{DaR}, \cite{MuTaUkFu}, \cite{ArHa},\cite{CaTi}), after a renormalization of time, of the location evolution of vortex filaments that move according to Euler equations. This model is conjectured to give the right dynamics of vortex filaments in certain situations (see \cite{JeSe} and the references therein). Through this model, at any given time $t$, the curve $\chi(t, \cdot)$ represents the location of the vortex filament, and the tangent vector to the curve $T=\chi_x$ represents the direction of the vorticity. 
\smallskip

A simple use of the Frenet equations immediately gives that \eqref{bf} can be written as
\begin{equation}\label{kin}
\chi_t=cb,
\end{equation}
which explains the binormal flow name, and that
\begin{equation}\label{inter}
T_x=cn,
\end{equation}
where $n$ is the normal vector, $b$ the binormal vector, and $c$ the curvature. It is also easy to see that $c^2(t,x)dx$ is an energy density that from \eqref{kin} describes the kinetic energy of the filament
and from \eqref{inter} the interaction energy of the chain. More precisely 
\begin{equation}\label{energies}\int|\chi_t(t,x)|^2 dx=\int|T_x(t,x)|^2dx=\int c^2(t,x)dx,\end{equation}
and for smooth solutions these quantities are conserved in time if they are finite. 

For analytical reasons it is much more convenient to use instead of the classical Frenet frame given by the tangent, normal and the binormal vectors,  the one given by parallel frames $(T, e_1, e_2)$ constructed as solutions of
\begin{equation}\label{Parallel}\left\{
\begin{split}T_x &=\qquad\quad \alpha e_1+ \beta e_2,\\
e_{1x} &=-\alpha\, T,\\
e_{2x} &=-\beta\, T.
\end{split}
\right.
\end{equation}
Above $\alpha$ and $\beta$ are real scalars. A further simplification can be made defining the complex vector $N=e_1+i e_2\in\mathbb S^2+i\mathbb S^2$ and the complex scalar 
\begin{equation}\label{u}
u=\alpha+i\beta 
\end{equation}
to obtain
\begin{equation}\label{TNtxa}\left\{
\begin{split}T_x &=\Re(\overline u\, N),\\
N_x &=-u\, T.
\end{split}
\right.
\end{equation}
It was proved in \cite{Ha} that in order the constraint $T_{xt}=T_{tx}$ to hold, $u$ has to solve the one dimensional focusing non-linear Schr\"odinger equation (NLS)
\begin{equation}\label{NLS}
iu_t+u_{xx}+\frac 12(|u|^2-a(t))u=0,
\end{equation} 
with $a(t)$ a real scalar, and the tangent and normal vector have to satisfy the linear system
\begin{equation}\label{TNtxb}\left\{
\begin{split}
T_t &=\Im (\overline{u_x}\,N),\\
N_t &=-iu_x T+\frac i2(|u|^2-a(t)) N.
\end{split}
\right.
\end{equation}
Finally, \eqref{bf} writes
\begin{equation}\label{BFu}\chi_t=\Im(\overline u\, N).\end{equation}
It is a well known fact that equation \eqref{NLS} is completely integrable and belongs to the so called AKNS-ZS hierarchy (\cite{ZS},\cite{ZM},\cite{AKNS}; for \eqref{SM} see \cite{Ta}, \cite{ZT}). The geometric meaning of $u$ is clarified when we write \eqref{u} in polar form
\begin{equation}\label{polar}
u(t,x)=\rho(t,x) e^{i\theta(t,x)}
\end{equation}
as $\rho(t)$ and $\theta_x(t)$ are the curvature and the torsion respectively of the curve $\chi(t)$  (\cite{Ha}).\par
Conversely, given a solution $u$ of \eqref{NLS}, $t_0,x_0\in\mathbb R$ and $\mathcal B$ an orthonormal basis of $\mathbb R^3$, one can construct a solution of \eqref{SM} by imposing $\{T,\Re N,\Im N\}(t_0,x_0)=\mathcal B$ and solving \eqref{TNtxb} for $(t,x_0)$ and then \eqref{Parallel} for $(t,x)$. Then, given a point $P\in\mathbb R^3$ a solution of \eqref{bf} is constructed by imposing $\chi(t_0,x_0)=P$ and solving $\chi_t=T \wedge T_{x}$ for $(t,x_0)$ and then $\chi_x=T$ for $(t,x)$. We shall call Hasimoto's method this way of constructing a solution of \eqref{SM}-\eqref{bf} from a solution of \eqref{NLS}.

\smallskip

For $x$ either in the real line $\Bbb R$ or in the torus $\Bbb T$, the well-posedness theory of the initial value problem associated to \eqref{NLS} was established in the function spaces $L^2(\Bbb R)$ in \cite{Ts} and $L^2(\Bbb T)$ in \cite{Bo}. Observe that \eqref {NLS} is invariant under the scaling $u_\lambda(t,x):=\lambda u(\lambda^2 t, \lambda x)$, and that according to this scaling $L^2(\Bbb R)$ is subcritical. Moreover, among the homogenoeus Sobolev spaces, $\dot H^{-\frac 12}$ is the one invariant with respect to the scaling, thus there is a gap of $1/2$ derivative between $L^2$ and the critical space $\dot H^{-\frac 12}$. Starting in \cite{VaVe} a lot of attention has been devoted to extend the well-posedness theory to function and distribution spaces, not necessarily given by the Sobolev class, to make this gap as small as possible. As observed in \cite{Gr} a good choice is to consider the so called Fourier-Lebesgue spaces that are defined using the $L^p$ norm of the Fourier transform of the solution. Therefore, they are invariant under translation in phase space or, equivalently, under the so-called Galilean symmetries. The Fourier-Lebesgue space of functions with Fourier transform in $L^\infty$ is also invariant with respect to the scaling. 
 Several results about ill-posedness, either in the sense that the map datum-solution is not uniformly continuous, or showing what is known as the norm inflation phenomena has been proved (\cite{KPV},\cite{ChCoTa},\cite{CaKa},\cite{Ki},\cite{Oh}). On the other hand, (local) well-posedness holds for data with Fourier transform in $L^p$ spaces, for all $2<p<+\infty$ (\cite{Gr},\cite{Ch}, \cite{GrH}). This result can be proved using perturbation techniques and a fixed point argument. Making a strong use of the complete integrability the gap to the critical space has been also reduced, even in the quite remarkable case of the non-homogeneous Sobolev class and for global well-posedness as recently proved in \cite{HGKV}  to all the subcritical cases, see also \cite{KiViZh},\cite{KoTa}, and \cite{OhWa} for the global in time result in the Fourier-Lebesgue class.  As a consequence, no possible unbounded flux in the size of the Fourier transform of the solution can happen in this subcritical regime. In this paper we focus our attention in the critical case.
 
 \smallskip

Geometrically, critical regularity for \eqref{bf} means the possibility of having either corners or logarithmic spirals. We will concentrate ourselves in the particular case of corners that implies the existence of jumps for the corresponding tangent vectors. The case of logarithmic spirals has been considered in \cite{GV1},\cite{GV2} \cite{Li1}, and \cite{Li2} and it is poorly understood. Nevertheless, we think that it is a quite relevant question to what extent the results showed in this work can be extended to that of the logarithmic spirals. 

The simplest way of obtaining a corner is to look for selfsimilar solutions of \eqref{bf}. That is to say, solutions that can be written as $\chi(t,x)=\sqrt tG(x/\sqrt t)$ for some well chosen $G$. A simple computation gives that such a curve $G$ has to solve the non-linear ODE
\begin{equation}\label{ODE}
\frac12 G-\frac{y}{2}G'=G'\times G''.
\end{equation} 
In \cite{LD},\cite{LRT}, and \cite{Bu} it is proved that a solution of \eqref{ODE} is characterized by the property that the curvature has to be a constant $c=\alpha$, and the torsion $\tau$ has to be $\tau(y)=y/2$. Thanks to \eqref{polar} this amounts to say that 
$$u_\alpha(t,x)=\frac{\alpha}{\sqrt {t}}e^{i\frac{x^2}{4t}}.
$$
As a consequence, if in \eqref{NLS} we take $a(t)=\frac{|\alpha|^2}{t}$ we observe that $u_{\alpha}$ solves \eqref{NLS} with initial condition
$$u_\alpha(0,x)=\sqrt i\alpha \delta(x).$$
Here $\delta$ is the Dirac-$\delta$ function located at the origin. 

Observe that \eqref{bf} is invariant under rotations. Therefore, it is enough to give the  Frenet frame of the curve given by $G$ at say $x=0$ to construct all the solutions of \eqref{ODE}. Take this frame, $\{T(0), n(0), b(0)\}$, to be the canonical orthonormal basis of $\mathbb R^3$, and call $G_\alpha$ the corresponding solution. It was proved  in \cite{GRV} that if
$$\chi_\alpha(t,x)=\sqrt t G_\alpha(\frac x{\sqrt t}),$$
then $\chi_\alpha$ solves \eqref{bf} for $t>0$ and there exists $\chi_\alpha(0,x)$ such that
$$|\chi_\alpha(t,x)-\chi_\alpha(0,x)|\leq2\alpha\sqrt t, \quad t>0.$$
Moreover, $\chi_\alpha(0,x)$ is given by two half lines joined at a corner at the origin. Calling $\theta_\alpha$ the corresponding interior angle,  it is proved in \cite{GRV} that
\begin{equation}\label{angle0}
\sin\frac{\theta_\alpha} 2=e^{-\frac {\pi}{2}|\alpha|^2}.
\end{equation}

In our previous works we have considered two different scenarios for finding a functional setting that includes this example and such that at least a local well-posedness result can be established for the corresponding initial value problem.

\smallskip

The first scenario is when the polygonal line given by $\chi_\alpha(0,x)$ is perturbed in such a way that the angle remains to be $\theta_\alpha$ but outside of the corner location the curve is smooth and tends to two, possibly different, lines at infinity. To find these solutions we study first \eqref{NLS} with $a(t)=\frac{|\alpha|^2}{t}$, then we use Hasimoto's method for positive times and eventually we deal with the limit curves at $t=0$. Regarding \eqref{NLS} we use the pseudo-conformal transformation. More concretely, we look for $v$ with $u=\mathcal T(v)$ where 
\begin{equation}
\label{pseudo}\mathcal{T} (v)(t,x)=\frac{e^{i\frac{x^2}{4t}}}{\sqrt {t}}\,\overline v(\frac 1t,\frac x{t}).
\end{equation}
Observe that above we also make a  change of variables so that the time interval $(0,1)$ becomes $(1, \infty)$. A standard calculation gives that $v$ has to solve
\begin{equation}\label{vNLS}
iv_t+v_{xx}+\frac 1{2t}(|v|^2-\tilde a(t))v=0, \quad \tilde a(t)=\frac1ta(\frac1t)=|\alpha|^2.
\end{equation} 
Solutions of the above equation \eqref{vNLS} formally preserve the $L^2$ norm
\begin{equation}\label{L2}\int |v|^2\, dx,
\end{equation} 
and 
\begin{equation}\label{E}E(v)(t):=\frac12\int|v_x(t)|^2\,dx-\frac{1}{4t}\int(|v|^2-|\alpha|^2)^2\,dx
\end{equation}
satisfies 
$$\partial_tE(v)(t)=\frac{1}{4t^2}\int(|v|^2-|\alpha|^2)^2\,dx.$$
Finally 
$$\Im \int x\bar v_x v\,dx$$
is also a conserved quantity. We could also consider any of the infinitely many conserved quantities of \eqref{NLS} but observe that from the definition of \eqref{pseudo}  it would be necessary to assume  regularity and decay on $v$ for these quantities to be finite. 

Notice that $v_\alpha(t,x):=\alpha$ is a particular solution of \eqref{vNLS}, and the corresponding binormal flow solution is $\chi_\alpha$. 
In a series of papers, see the introduction of \cite{BV4} for a survey of the results, we prove well posedness and small data modified scattering results for $v-v_\alpha$, $t\geq1$, in some appropriate function spaces such that $E(t)$ given in \eqref{E}  is finite. 

\smallskip

The second scenario was started in \cite{BVAnnPDE} and considers solutions of \eqref{bf} that at time $t=0$  are given by a skew polygonal line $\chi_0(x)$ that tends to two lines when $x\rightarrow\pm\infty$. The corners are all located at integers $j\in \Bbb Z$. We use the Hasimoto's method, and at the level of \eqref{NLS} this problem is related to consider data 
$$\sum_j\alpha_j\delta(x-j)$$
with some appropriate conditions in the size of $\alpha_j$. Following \cite{Kita}\footnote{ The authors acknowledge Tohru Ozawa for having pointed to them this article.} we look for solutions of \eqref{NLS} with $a(t)=M/t$, where $M:=\sum_j|\alpha_j|^2$,  for $t>0$, of the type
\begin{equation}\label{ansatz}
u(t,x)=\sum_jA_j(t)\frac{e^{i\frac{(x-j)^2}{4t}}}{\sqrt{t}},
\end{equation}
with
\begin{equation}\label{Aj}A_j(t)=e^{-i(|\alpha_j|^2-M)\log \sqrt{t}}(\alpha_j+R_j(t)),
\end{equation}
and $R_j(t)$ satisfying decay properties as $t$ goes to zero. The construction is performed for $\{\alpha_j\}$ such that 
\begin{equation}\label{weight}
\sum_j|j|^{2s}||\alpha_j|^2<\infty\quad\text{for}\, s>1/2.
\end{equation}
Using Hasimoto's method we construct 
a solution of the binormal flow for $t>0$ such that we recover at time $t=0$ the curve $\chi_0$, provided that 
we choose $\alpha_j$ in a precise way determined by the curvature and torsion angles of $\chi_0$ at $x=j$. In particular, as \eqref{angle0} for self-similar solutions, we choose
\begin{equation}\label{angle}
\sin\frac{\theta_j} 2=e^{-\frac {\pi}{2}|\alpha_j|^2}.
\end{equation}
The need of weights in \eqref{weight} comes from  integrating  \eqref{TNtxb} because the coefficients of that system involve $u_x$. 
Notice that making the expansion of the square phases in \eqref{ansatz} one immediately computes which is  the pseudo-conformal transformation \eqref{pseudo} of $u$. Indeed, we can write $u=\mathcal T(v)$ with $v$ the $2\pi-$periodic function in the $x-$variable:
$$v(t,x)=\sum_jA_j(\frac 1t) e^{-itj^2+ijx},$$
solution of \eqref{vNLS} with $\tilde a(t)=M$.
Observe that \eqref{L2}, the $L^2$ conservation law of \eqref{vNLS}, and \eqref{Aj} give that for $t>0$
\begin{equation}\label{M}\sum_j|A_j(t)|^2=M.
\end{equation}
Recall that from \eqref{Parallel} 
$$|T_x(t,x)|^2=|u(t,x)|^2=\frac1t\left |v(\frac 1t,\frac x{t})\right|^2.$$
Hence for any $t>0$ the function $|T_x(t,x)|^2$ is a $2\pi t$-periodic function in the $x-$variable and by \eqref{M} the integral on each of the periods is $M$. Nevertheless, this ``conservation law" does not give any information about $\hat T_x$, the Fourier transform of $T_x$. We proved in \cite{BVCPDE} that  \eqref{M} can be also understood as a kind of scattering energy of $\hat T_x$ for the solutions of \eqref{SM} and \eqref{bf} that we constructed in \cite{BVAnnPDE}. More precisely,
if
\begin{equation}\label{energydef}
\Xi(T(t)):=\underset{k\rightarrow\infty}{\lim}\int_k^{k+1}|\widehat{T_x}(t,\xi)|^2d\xi,
\end{equation}
then for $t>0$ we have the following conservation law:
\begin{equation}\label{cons}
\Xi(T(t))=4\pi\sum_j|\alpha_j|^2.
\end{equation}
It was also proved in \cite{BVCPDE} that there is a jump discontinuity of $\Xi(T(t))$ at $t=0$. From \eqref{energydef} we can see $|\widehat{T_x}(t,\xi)|^2d\xi$ as an asymptotic energy density in phase space. 
The main result of this paper is to prove that this energy density can grow in time at specific Fourier modes.

The procedure used in \cite{BVAnnPDE} to construct the solution $u$ with the shape given in \eqref{ansatz} is to solve the infinite dimensional non-homogeneous dynamical system generated by  $A_j(t)$. The choice \eqref{M} kills all the resonant frequencies except those generated by the interaction of any mode $j$ with itself. This interaction is easily absorbed by a logarithmic modification of the phase of $A_j(t)$ that has been already incorporated in \eqref{Aj}. Hence, a fixed point argument can be performed  to solve the system and to obtain the decay properties of $R_j(t)$ mentioned above. As a consequence, there is no possible growth for $A_j(t)$. 

The appearance of the logarithmic correction in the phases mentioned above is analogous to the long-range modified scattering that smooth small solutions of \eqref{NLS} satisfy (\cite{Oz}). This modified scattering is behind the growth results of high Sobolev norms proved in \cite{HPTV} for the scalar cubic NLS on $\mathbb R\times\mathbb T^d$ with $d\geq2$. In that setting, which is a mixture of periodic and continuous variables, the authors prove a {\it loglog} growth in time of the amplitudes of the Fourier modes. The key ingredient for this growth is that, differently to what happens in one dimension, for $d\geq2$ the corresponding infinite dimensional system has a non-trivial resonant subsystem that generates solutions whose high Sobolev norms grow in time - see \cite{CKSTT}, \cite{GK}, \cite{Hani}. All these equations are not integrable. 
At this purpose we recall that growth of Sobolev norms for an integrable equation was proved in the case of the cubic Szeg\H o equation (\cite{GeGR}, see also \cite{GLOR}). 

\smallskip

In Theorem \ref{th} below we obtain a precise logarithmic growth in time for $\hat T_x$, being $T$ a solution of  \eqref{SM} and  the tangent vector of a curve that evolves according to \eqref{bf}. This curve  at time $t=0$ is a polygonal line with just two corners of the same angle that are located at $x=1$ and $x=-1$. 
Recall that $T_x$ represents the variation of the direction of the vorticity that as proved in \cite{CFM} plays a crucial role in the possible formation of singularities of Euler equations. 

\begin{theorem}\label{th}
Consider a polygonal line $\chi_0(x)$ with two corners of angle $\theta$ located at $x\in\{-1,1\}$. Let $\chi(t,x)$ be its evolution by the binormal flow \eqref{bf} as explained above and let $T(t,x)$ be its tangent vector. 

There exist $t_\theta,\tilde t_\theta\in (0,1)$ and $n_\theta\in\mathbb N$ such that for $n\in\mathbb N, n\geq n_\theta$, $t\in(\frac{\tilde t_\theta}{n^2\log^2 n},\frac{t_\theta}{n^2})$  and $\xi$ satisfying either $|\xi- \frac{1}{2\pi t}|\leq  \frac{1}{n}$ or $|\xi+ \frac{1}{2\pi t}|\leq  \frac{1}{n}$ the following growth holds:
\begin{equation}\label{Tgrowthcorners}|\widehat{T_x}( t,\xi)- \,V\,\log n|\leq \frac12|V|\log n,\,
\end{equation}
where $V$ is the non-null vector $i(-\frac 2\pi) \log (\sin \frac {\theta}{2})(T^{-\infty}-2T^0+T^{+\infty})$, and the vectors $T^{-\infty}, T^0, T^{+\infty}$ are the directions of the polygonal line $\chi_0(x)$ on $x<-1, -1<x<1$ and $1<x$ respectively. 

As a consequence, for $t\in(0,\frac{t_\theta}{n_\theta^2})$ there exists $C_\theta>0$ such that
\begin{equation}\label{logt}\sup_{\xi}|\widehat{T_x}(t,\xi)|\geq C_\theta \log t.\end{equation}

Finally for $\xi$ satisfying  $|\xi- \frac{1}{2\pi t}|\geq  \frac{3}{8\pi t}$ and $|\xi+ \frac{1}{2\pi t}|\geq  \frac{3}{8\pi t}$ and $t\in(0,\frac{1}{4\pi })$  we have an upper-bound of $|\widehat{T_x}(t,\xi)|$ depending only on $\theta$.

\end{theorem}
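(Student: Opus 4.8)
The plan is to derive an explicit asymptotic expansion of $\widehat{T_x}(t,\xi)$ near $t=0$, valid for $\xi$ close to the two "resonant" frequencies $\pm\frac{1}{2\pi t}$, and to read off the growth from the leading term. Since the construction is via Hasimoto's method, $T_x=\Re(\overline u N)$ with $u$ given by the ansatz \eqref{ansatz}--\eqref{Aj} for the two-corner datum (so the sum runs over $j\in\{-1,1\}$, $M=|\alpha_1|^2+|\alpha_{-1}|^2=2|\alpha|^2$ with $\alpha$ determined by $\theta$ through \eqref{angle}). First I would substitute the ansatz into $\overline u N$ and expand the quadratic phases $\frac{(x-j)^2}{4t}$; the cross terms $\overline{A_j}A_k\,\frac{e^{i(\dots)}}{t}$ produce, for $j\ne k$, a \emph{linear} phase $e^{i\frac{(k-j)x}{2t}}$ times a slowly varying factor, which on the Fourier side is a bump concentrated near $\xi=\pm\frac{j-k}{4\pi t}=\pm\frac{1}{2\pi t}$ — this is exactly the window in the statement. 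The diagonal terms $j=k$ carry no net linear phase and contribute to $\xi$ near $0$, hence land in the complementary region handled by the last assertion.

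The core computation is then: $\widehat{T_x}(t,\xi)$ near $\xi\approx\frac{1}{2\pi t}$ is, to leading order, $\frac1t\,\overline{A_1}(t)A_{-1}(t)$ (or its conjugate, depending on sign conventions) times the Fourier transform of the envelope $\overline N_{+}(t,x)N_{-}(t,x)e^{i(\text{quadratic remainder})}$ evaluated at the shifted frequency, where $N_{\pm}$ are the relevant pieces of the parallel-frame normal vector. To evaluate this I would use that for $t\to0$ the solution $\chi(t,\cdot)$ converges to $\chi_0$ and the frame converges (away from the corners) to the piecewise-constant frame of the polygonal line, so $N$ essentially takes three constant complex-vector values associated to $T^{-\infty},T^0,T^{+\infty}$; the Fourier transform of the product of characteristic functions of the three intervals, against the oscillatory factor $e^{i\frac{x^2}{4t}\cdot(\text{something})}$, produces — via stationary phase / the explicit Fresnel-type integrals already used in \cite{GRV},\cite{BVAnnPDE},\cite{BVCPDE} — a logarithm in the combination $\frac{1}{n}$ vs $t$, precisely when $t\in(\frac{\tilde t_\theta}{n^2\log^2 n},\frac{t_\theta}{n^2})$. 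Matching the amplitude $\frac1t|A_1||A_{-1}|$ with $|A_j(t)|\to|\alpha_j|$ and $\arg A_j$ controlled by \eqref{Aj}, together with the identity $-\frac2\pi\log\sin\frac\theta2=|\alpha|^2$ from \eqref{angle}, yields the vector $V=i(-\tfrac2\pi)\log(\sin\tfrac\theta2)\,(T^{-\infty}-2T^0+T^{+\infty})$, with the factor $(T^{-\infty}-2T^0+T^{+\infty})$ coming from the three-interval combinatorics (two cross-terms share the central piece with opposite signs). Getting the error below $\frac12|V|\log n$ requires keeping track of: (i) the $R_j(t)$ corrections, which decay as $t\to0$ and are thus $o(\log n)$ on the stated time window; (ii) the error in approximating $N$ by its piecewise-constant limit, again $O(\sqrt t)$-type; (iii) the non-stationary part of the oscillatory integral away from the windowed frequency. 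The bound \eqref{logt} then follows by choosing, for given small $t$, the integer $n\sim t^{-1/2}$ so that $t$ lands in the admissible window and $\log n\sim\frac12|\log t|$; and the final upper bound for $\xi$ bounded away from $\pm\frac1{2\pi t}$ follows from the same expansion since there the leading oscillatory integral is non-stationary and contributes only an $O_\theta(1)$ term (the diagonal contribution plus rapidly decaying tails).

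The main obstacle I anticipate is controlling the oscillatory integral uniformly in the three parameters $(t,n,\xi)$ with a clean enough error: one needs the Fourier transform of $\mathbf 1_{(-1,1)}$ (and of the two half-lines, suitably regularized by the smoothing that the flow provides for $t>0$) multiplied by the chirp $e^{i x^2/(4t)}$, localized to $|\xi\mp\frac1{2\pi t}|\le\frac1n$, and to show that this equals $c\log n$ plus an error that is genuinely smaller than $\frac12|V|\log n$ — not merely $o(\log n)$ with an unspecified constant — for all $n\ge n_\theta$. This is where the precise interplay between the lower time cutoff $\frac{\tilde t_\theta}{n^2\log^2 n}$ (which prevents the logarithm from being swamped by polynomial-in-$n$ errors) and the upper cutoff $\frac{t_\theta}{n^2}$ (which keeps the frame close to its $t=0$ limit) must be exploited, and it will require the sharp decay estimates on $R_j$ from \cite{BVAnnPDE} together with a careful non-stationary-phase estimate. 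A secondary technical point is justifying that the contributions of frequencies $k-j$ other than $\pm2$ (there are none here since only $j=\pm1$ appear, but the self-interaction $j=k$ and the interaction with the smooth "error" parts of $u$ must be shown harmless) do not pollute the window around $\pm\frac1{2\pi t}$; this is a separation-of-frequencies argument using that $\frac1{2\pi t}\gg\frac1n$ on the admissible window.
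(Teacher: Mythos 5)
Your proposal correctly anticipates the shape of the answer (the resonant windows $\xi\approx\pm\frac{1}{2\pi t}$ coming from the frequency separation $j-k=\pm2$, the amplitude $\overline{A_1}A_{-1}\to|\alpha|^2$, the piecewise--constant limit of the frame producing $T^{-\infty}-2T^0+T^{+\infty}$, the bookkeeping of the $R_j$ corrections and of the frame--convergence error on the window $(\tilde t_\theta/(n^2\log^2 n),t_\theta/n^2)$, and the derivation of \eqref{logt} by choosing $n\sim t^{-1/2}$). But the central mechanism that actually produces the $\log n$ is not correctly identified, and as described the computation would not yield a logarithm. The quantity $\widehat{T_x}=\int e^{i2\pi x\xi}\Re(\overline u N)\,dx$ is \emph{linear} in the amplitudes $A_j$: substituting the ansatz \eqref{ansatz} into $\overline u N$ gives a single sum $\sum_j\overline{A_j}e^{-i(x-j)^2/4t}N(t,x)/\sqrt t$, with no cross terms $\overline{A_j}A_k$ and no linear phase $e^{i(k-j)x/2t}$. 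Evaluating these chirped integrals by stationary phase against the (essentially piecewise constant) $N$, as you propose, produces full or truncated Fresnel integrals of size $O(\sqrt t)$, which after the $1/\sqrt t$ prefactor give $O(1)$ contributions --- no logarithm appears. The quadratic structure $\overline{A_j}A_r$, the linear phase $e^{ix(j-r+4\pi t\xi)/2t}$ whose near--cancellation defines the resonant window $|j-r+4\pi t\xi|<2nt$, and the log--divergent integral all arise only after one \emph{integration by parts in the quadratic phase combined with the frame equation} $N_x=-uT$: this is the key step (Lemma 2.2 of the paper). It replaces $N$ by $-uT$, hence inserts the second copy of the ansatz, and simultaneously generates the Cauchy--type kernel $\frac{1}{x-j-4\pi t\xi}-\frac{1}{x-r+4\pi t\xi}$, truncated at distance $\frac1n$ from the singularities. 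The logarithm is then simply $\int_{\frac1n<|x\mp1|<\frac13}\frac{dx}{x\mp1}=\log\frac n3$ integrated against the piecewise--constant $T(0,\cdot)$ --- an elementary computation, not a Fresnel/stationary--phase one. Consistent with this confusion, your claimed leading amplitude $\frac1t\overline{A_1}A_{-1}$ is off by a factor of $t$ (the $2t$ gained in the integration by parts cancels the $\frac1t$ from the two $1/\sqrt t$ factors), and taken literally would predict growth of order $t^{-1}$ rather than $|\log t|$.

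A secondary issue: you describe the leading term as involving ``the envelope $\overline N_+N_-$'' for ``pieces'' $N_\pm$ of the normal vector attached to each corner. The parallel frame $(T,\Re N,\Im N)$ is a single global frame and does not decompose as a sum over corners; in the actual argument only one copy of $N$ ever appears, and it is immediately converted into $uT$ by the integration by parts, after which everything is expressed through $T$ alone and its $t\to0$ limit. Your treatment of the off--resonant region (writing $4\pi t\xi=2m+d$ with $m\neq\pm1$ and using that every surviving pair then involves at least one decaying $R_j$) and your final interval--covering argument for \eqref{logt} do match the paper once the correct leading term is in hand.
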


Let us first note that our result concerns the growth of $\|\widehat T_x\|_\infty$, the $L^\infty $ norm of the Fourier transform of $T_x$,  and therefore we are a looking at a critical norm in the class of Fourier-Lebesgue spaces. 

The proof Theorem \ref{th}, which is given in \S \ref{sectth}, is based on the computation of $\hat T_x(\xi)$  using \eqref{TNtxa} with $u=\alpha+i\beta$ satisfying \eqref{ansatz}. This generates a first sum in $j$ with the corresponding $\bar A_j$ and their quadratic phases. Then, we use again \eqref{TNtxa} to integrate by parts, and a second sum appears with some new amplitudes $A_r$ and new quadratic phases. It was observed in \cite{BVCPDE} that a resonance can happen if $j-r\in\Bbb Z$ is properly chosen. It is easy to obtain a logarithmic upper bound  for this resonance. It involves a small set of frequencies $\xi$ which does not prevent \eqref{energydef} to hold. Our purpose in this paper to obtain a lower bound. 


In \S \ref{sectN} we prove the  extension of Theorem \ref{th} to the case of polygonal lines with many corners.
The exhibited logarithmic growth is a hint that the numerical computations given in \cite{DHV2} about the unboundedness of $\|\widehat T_x\|_\infty$, are correct - see equation (8) in \S5 in that paper. In that case the initial condition of \eqref{bf} is a planar regular polygon. The dynamics becomes then periodic also in time and exhibits a Talbot effect, in the sense that at rational $p/q$ multiples of the time period, skew polygons emerge with typically as many sides as $q$, see \cite{JeSm2}. So this (numerical) logarithmic growth also happens at these rational times. A rigorous proof of this fact is a very challenging question that we propose to address in the future.

\smallskip

We shall denote systematically by $C(\|A_j(t)\|_{l^1})$ constants depending only on universal constants and on a finite number of positive powers of $\|A_j(t)\|_{l^1}$.

\section{Proof of Theorem \ref{th}}\label{sectth}
 Let $n\in\mathbb N^*$. First we recall that for $s>\frac 12, 0<\gamma<1$, it was proved in \cite{BVAnnPDE} that equation
\begin{equation}\label{NLSbis}iu_t+u_{xx}+\frac 12(|u|^2-\frac{\sum_j|\alpha_j|^2}{t})u=0,\end{equation}
has an unique local solution for $t\in(0,\mathcal T)$ of type
\begin{equation}\label{ansatzbis}u(t,x)=\sum_{j\in\mathbb Z}e^{-i(|\alpha_j|^2-M)\log \sqrt{t}}(\alpha_j+R_j(t))\frac{e^{i\frac{(x-j)^2}{4t}}}{\sqrt{t}},\end{equation}
with
\begin{equation}\label{decayR}\sup_{0<t<\mathcal T}t^{-\gamma}\|R_j(t)\|_{l^{2,s}}+t\,\|\partial_t R_j(t)\|_{l^{2,s}}<C(\gamma)\|\alpha_j\|_{l^{2,s}}^3.\end{equation}
The time of existence $\mathcal T$ is in terms of  $s,\gamma, \|\alpha_j\|_{l^{2,s}}$.

As explained in the Introduction, the evolution $\chi(t)$ of $\chi_0$ on $(0,\mathcal T)$, is constructed by Hasimoto's method from the solution \eqref{ansatzbis} 
of \eqref{NLSbis} 
with, in view of \eqref{angle},
$$\alpha_j=\sqrt{(-\frac 2\pi) \log (\sin \frac {\theta}{2})}=:\alpha \mbox{ for }j\in\{\pm 1\},\quad \alpha_j=0\mbox{ otherwise}.$$

\subsection{A general analysis on locating possible growth scenarios}

We shall start with a  lemma that highlights the part of $\widehat{T_x}(t,\xi)$ that can grow for small times, in general cases of polygonal lines.  
\begin{lemma}\label{growth} Let $\{\alpha_j\}\in l^{2,\frac 12^+}$ and let $\chi(t)$ be the evolution through the binormal flow of the corresponding polygonal line.  
For $t\in (0,\frac 1{4\pi n^2})$ and $\xi\in\mathbb R$ the tangent vector of $\chi(t)$ satisfies
$$\left|\widehat{T_x}(t,\xi)-i\sum_{|j- r+ 4\pi t\xi|<2nt}\,\overline{A_j(t)}A_r(t)\, e^{-i\frac{j^2-r^2}{4t}}\right.$$
$$\left.\times\int_{|x-j-4\pi t\xi|>\frac 1n,\,|x-r+4\pi t\xi|>\frac 1n}e^{i\frac{x( j- r+ 4\pi t\xi)}{2t}}\left(\frac{1}{x-j-4\pi t\xi}-\frac{1}{x-r+ 4\pi t\xi}\right)\,T(t,x)\,dx\right|\leq \frac{C(\|A_j(t)\|_{l^{1}})}{n\sqrt{t}}.$$

\end{lemma}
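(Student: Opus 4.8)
The plan is to start from the formula $\widehat{T_x}(t,\xi)=\int e^{-ix\xi}T_x(t,x)\,dx$ and use the relation $T_x=\Re(\overline u\, N)$ from \eqref{TNtxa} together with the ansatz \eqref{ansatzbis} for $u$. Writing $A_j(t)=e^{-i(|\alpha_j|^2-M)\log\sqrt t}(\alpha_j+R_j(t))$ and inserting \eqref{ansatzbis} gives $\overline u(t,x)=\sum_j \overline{A_j(t)}\,\frac{e^{-i\frac{(x-j)^2}{4t}}}{\sqrt t}$, so that
$$\widehat{T_x}(t,\xi)=\Re\left(\sum_j\frac{\overline{A_j(t)}}{\sqrt t}\int e^{-ix\xi}e^{-i\frac{(x-j)^2}{4t}}\,N(t,x)\,dx\right).$$
The key mechanism, as indicated in the Introduction, is that the phase $-x\xi-\frac{(x-j)^2}{4t}$ has a stationary point at $x=j+2t\xi$ times a constant; more precisely one completes the square to see the oscillation $e^{i\frac{x(j-r+4\pi t\xi)}{2t}}$ appears after a second integration by parts. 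First I would substitute $N_x=-u\,T$ (again from \eqref{TNtxa}), i.e. $N(t,x)=N(t,x_0)-\int_{x_0}^x u(t,y)T(t,y)\,dy$, but it is cleaner to integrate by parts directly: since $\partial_x\!\left(e^{-i\frac{(x-j)^2}{4t}}\right)=-i\frac{x-j}{2t}e^{-i\frac{(x-j)^2}{4t}}$, we can write $e^{-ix\xi}e^{-i\frac{(x-j)^2}{4t}}$ as a derivative up to the factor $\frac{1}{x-j-4\pi t\xi}$ (after absorbing the $e^{-ix\xi}$ into the Gaussian phase by completing the square, which produces the shift by $4\pi t\xi$ — here the $2\pi$ normalisation of the Fourier transform is what turns $2t\xi$ into $4\pi t\xi$, matching the statement). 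Integrating by parts in $x$ then moves the derivative onto $N_x=-u\,T$, which reintroduces a sum $\sum_r A_r(t)\frac{e^{i\frac{(x-r)^2}{4t}}}{\sqrt t}$; multiplying the two Gaussian phases and completing the square once more yields the cross phase $e^{-i\frac{j^2-r^2}{4t}}e^{i\frac{x(j-r+4\pi t\xi)}{2t}}$ together with the two singular factors $\frac{1}{x-j-4\pi t\xi}$ and $\frac{1}{x-r+4\pi t\xi}$ in the statement. The coefficient $i$ out front comes from the $-i$ in $\partial_x$ of the Gaussian combined with the $-1$ from $N_x=-uT$ and the sign of the integration by parts, together with the observation that the resulting expression is purely imaginary-coefficient times a real vector, so the $\Re$ can be removed.

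Next I would account for the cutoffs and the error term. The singular factors $\frac{1}{x-j-4\pi t\xi}$ are only problematic near their poles, so I would excise the intervals $|x-j-4\pi t\xi|<\frac1n$ and $|x-r+4\pi t\xi|<\frac1n$; the contribution of these excised regions, together with the boundary terms produced when integrating by parts on a domain with a hole, is what must be bounded by $\frac{C(\|A_j(t)\|_{l^1})}{n\sqrt t}$. For the excised intervals one uses $|T|=1$, $|A_j(t)|\le\|A_j(t)\|_{l^1}$, the $\frac{1}{\sqrt t}$ from the Gaussian normalisation, and the fact that each excised interval has length $\frac2n$, summing a convergent series in $j$ (here one needs the $\ell^{2,1/2^+}$ assumption, or at least $\ell^1$, on $\{\alpha_j\}$, and the smallness of $R_j(t)$ from \eqref{decayR}, to control $\sum_j|A_j(t)|$). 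Near the pole itself the factor $\frac{1}{x-j-4\pi t\xi}$ is singular but the difference of the two singular factors is $O(1)$ when $|j-r|$ is bounded, which is used for the terms kept; for the error one simply bounds $\left|\frac{1}{x-j-4\pi t\xi}\right|$ on $|x-j-4\pi t\xi|\ge\frac1n$ by $\frac{\log}{\cdots}$ — actually a crude $L^1$ estimate on a window of size $O(t)$ suffices, giving the $\frac1n$. The restriction of the $j$-sum to $|j-r+4\pi t\xi|<2nt$ comes from the following: when $|j-r+4\pi t\xi|\ge 2nt$, the cross phase $e^{i\frac{x(j-r+4\pi t\xi)}{2t}}$ oscillates fast (frequency $\gtrsim n$) so a further (non-resonant) integration by parts gains a factor $\frac{t}{|j-r+4\pi t\xi|}\lesssim\frac1n$ per unit, and summing over the off-diagonal $r$'s (again using $\ell^1$ on $A$) produces the claimed error; the condition $t<\frac1{4\pi n^2}$ is what guarantees this régime is genuinely non-stationary and that the geometric-type series converges. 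I would organise this as: (i) exact identity for $\widehat{T_x}$ after two integrations by parts on the full line, keeping all boundary terms; (ii) discard boundary terms and the near-pole contributions, estimated crudely; (iii) truncate the double sum to $|j-r+4\pi t\xi|<2nt$ by a non-stationary-phase argument.

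The main obstacle I anticipate is the bookkeeping in step (iii): one must integrate by parts in the cross phase $e^{i\frac{x(j-r+4\pi t\xi)}{2t}}$ while the amplitude already contains the singular factors $\frac{1}{x-j-4\pi t\xi}-\frac{1}{x-r+4\pi t\xi}$ and the (merely bounded, not smooth) tangent vector $T(t,x)$. Since $T$ is only Lipschitz-in-$x$ at best (and one does not want to pay derivatives on $T$), the honest way is to go back one step and use $T_x=\Re(\overline u N)$ to replace $\partial_x T$ when it appears, i.e. to do the non-stationary-phase reduction \emph{before} the second integration by parts rather than after — equivalently, to integrate by parts twice simultaneously and track which term is resonant. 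Controlling the derivative of the rational factor $\partial_x\!\left(\frac{1}{x-j-4\pi t\xi}-\frac{1}{x-r+4\pi t\xi}\right)=O\!\left(\frac{|j-r|}{(x-j-4\pi t\xi)^2(\cdots)}\right)$ against the excised regions, and checking that the resulting bound still sums in $(j,r)$ against $\|A_j(t)\|_{\ell^1}$ to give exactly $\frac{C}{n\sqrt t}$ and not something worse like $\frac{\log n}{n\sqrt t}$, is the delicate point; here one exploits that on the kept region $|x-j-4\pi t\xi|\ge\frac1n$ one has $\frac1{(x-j-4\pi t\xi)^2}$ integrable with $L^1$-norm $\lesssim n$, and that the $\frac{t}{|j-r+4\pi t\xi|}$ gain from non-stationary phase beats it since $t<\frac1{4\pi n^2}$.
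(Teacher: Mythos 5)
Your plan follows the paper's proof essentially step for step: insert the ansatz for $u$ into $T_x=\Re(\overline u N)$, complete the square to shift the stationary point to $j\pm 4\pi t\xi$, excise $\frac1n$-neighbourhoods at cost $\frac{C}{n\sqrt t}$, integrate by parts via the Gaussian phase so the derivative lands on $N_x=-uT$ producing the second sum and the linear cross phase, and then truncate to $|j-r+4\pi t\xi|<2nt$ by non-stationary phase, substituting $T_x=\Re(\overline u N)$ and doing one more quadratic-phase integration by parts rather than differentiating $T$ — exactly as in the paper, with the same use of $t<\frac1{4\pi n^2}$. The one bookkeeping slip is your claim that the $\Re$ "can be removed": in fact both conjugate halves of $\Re(\overline u N)$ must be kept, and the difference $\frac{1}{x-j-4\pi t\xi}-\frac{1}{x-r+4\pi t\xi}$ arises only after relabelling $(j,r)\mapsto(r,j)$ in the conjugate half and adding, not from a single integration by parts.
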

\begin{proof}

From \eqref{TNtxa} we have $T_x(t,x)=\Re(\overline{u} N)(t,x)$ so
$$\widehat{T_x}(t,\xi)=\int_{-\infty}^\infty e^{i2\pi x\xi}\,\Re(\overline{u} N)(t,x)dx=\int_{-\infty}^\infty e^{i2\pi x\xi}\,\Re(\sum_j\overline{A_j(t)}\frac{e^{-i\frac{(x-j)^2}{4t}}}{\sqrt{t}} N(t,x))dx$$
$$=\frac{e^{i4\pi^2t\xi^2}}{2\sqrt{t}}\sum_{\pm, j}e^{i2\pi j\xi}\,\overline{A_j(t)}\int_{-\infty}^\infty e^{-i\frac{(x-j-4\pi t\xi)^2}{4t}}N(t,x)dx$$
$$+\frac{e^{-i4\pi^2t\xi^2}}{2\sqrt{t}}\sum_{\pm, j}e^{i2\pi j\xi}\,A_j(t)\int_{-\infty}^\infty e^{i\frac{(x-j+4\pi t\xi)^2}{4t}}\overline{N (t,x)}dx.$$
We start by removing bounded pieces of the integral centered in $j\pm4\pi t\xi$, by using a cut-off function $\psi_n$ vanishing on $B(0,\frac 1{4n})$ and valued $1$ on $^cB(0,\frac 1{2n})$. These pieces are easy to estimate by $C\frac{\|A_j(t)\|_{l^{1}}}{n\sqrt{t}}$, as the integrants are of constant modulus. On the remaining pieces we integrate by parts:
$$\left|\widehat{T_x}(t,\xi)+i\sqrt{t}e^{i4\pi^2t\xi^2}\sum_{\pm, j}e^{i2\pi j\xi}\,\overline{A_j(t)}\int_{-\infty}^\infty e^{-i\frac{(x-j-4\pi t\xi)^2}{4t}}\left(\frac{N(t,x)\psi_n(x-j-4\pi t\xi)}{x-j-4\pi t\xi}\right)_xdx\right.$$
$$\left.-i\sqrt{t}e^{-i4\pi^2t\xi^2}\sum_{\pm, j}e^{i2\pi j\xi}\,A_j(t)\int_{-\infty}^\infty  e^{i\frac{(x-j+4\pi t\xi)^2}{4t}}\left(\frac{\overline{N(t,x)}\psi_n(x-j+4\pi t\xi)}{x-j+4\pi t\xi}\right)_x dx\right|\leq C\frac{\|A_j(t)\|_{l^{1}}}{n\sqrt{t}}.$$
When the derivative falls on $\psi_n$ and on the denominator we get terms bounded by $Cn\sqrt{t}\|\{A_j(t)\}\|_{l^1}$, and $n\sqrt{t}\leq \frac 1{n\sqrt{t}}$ as $t\in (0,\frac 1{4\pi n^2})$. We are left with the part from $N_x=-uT$. We get then
$$\left|\widehat{T_x}(t,\xi)-i\sum_{j}\,\overline{A_j(t)} \int_{-\infty}^\infty \frac{\sum_r A_r(t)e^{i\frac{x(j-r+4\pi t\xi)}{2t}}e^{-i\frac{j^2-r^2}{4t}}}{x-j-4\pi t\xi}T(t,x)\psi_n(x-j-4\pi t\xi)dx\right.$$
$$\left.+i\sum_{j}\,A_j(t)\int_{-\infty}^\infty  \frac{\sum_r \overline{A_r(t)}e^{i\frac{x(r-j+4\pi t\xi)}{2t}}e^{i\frac{j^2-r^2}{4t}}}{x-j+4\pi t\xi}T(t,x)\psi_n(x-j+4\pi t\xi)dx\right|\leq C\frac{\|A_j(t)\|_{l^{1}}}{n\sqrt{t}}.$$
For $|\frac{\pm(j- r)+ 4\pi t\xi}{2t}|\geq n$ we perform an integration by parts using the linear phase and get terms bounded by $Ct\|\{A_j(t)\}\|_{l^1}^2$ when the derivative falls on $\psi_n$ and on the denominator. When the derivative falls on $T$ we get $\Re(\overline{u}N)$ which yields a quadratic phase. We complete to a quadratic phase incorporating the linear one and we remove bounded pieces localized where the quadratic phases cancel, which are upper-bounded by $C\sqrt{t}\|\{A_j(t)\}\|_{l^1}^3$. Then we perform an integration by parts from the quadratic phase and get terms upper-bounded by $Cnt\sqrt{t}\|\{A_j(t)\}\|_{l^1}^3+Cnt\|\{A_j(t)\}\|_{l^1}^4$. 
Summarizing we have obtained for all $\xi\in\mathbb R$
$$\left|\widehat{T_x}(t,\xi)-i\sum_{|j- r+ 4\pi t\xi|<2nt}\,\overline{A_j(t)}A_r(t)\, I^+(t,\xi,j,r)+i\sum_{|-( j- r)+ 4\pi t\xi|<2nt}\,A_j(t)\overline{A_r(t)}\, I^-(t,\xi,j,r)\right|$$
$$\leq \frac{C(\|A_j(t)\|_{l^{1}})}{n\sqrt{t}},$$
where
$$I^\pm(t,\xi,j,r):=e^{\mp i\frac{j^2-r^2}{4t}}\int_{|x-j\mp 4\pi t\xi|>\frac 1n} \frac{e^{i\frac{x(\pm( j- r)+ 4\pi t\xi)}{2t}}}{x-j\mp 4\pi t\xi}\,T(t,x)\,dx.$$
The first discrete summation holds for $(j,r)$ if and only if the second discrete summation holds for $(r,j)$, so
$$\left|\widehat{T_x}(t,\xi)-i\sum_{|j- r+ 4\pi t\xi|<2nt}\,\overline{A_j(t)}A_r(t)\, (I^+(t,\xi,j,r)- I^-(t,\xi,r,j))\right|\leq \frac{C(\|A_j(t)\|_{l^{1}})}{n\sqrt{t}}.$$
We then have
$$\left|\widehat{T_x}(t,\xi)-i\sum_{|j- r+ 4\pi t\xi|<2nt}\,\overline{A_j(t)}A_r(t)\, e^{-i\frac{j^2-r^2}{4t}}\right.$$
$$\left.\times\int_{|x-j-4\pi t\xi|>\frac 1n,\,|x-r+4\pi t\xi|>\frac 1n}e^{i\frac{x( j- r+ 4\pi t\xi)}{2t}}\left(\frac{1}{x-j-4\pi t\xi}-\frac{1}{x-r+ 4\pi t\xi}\right)\,T(t,x)\,dx\right|$$
$$\leq \frac{C(\|A_j(t)\|_{l^{1}})}{n\sqrt{t}}.$$
\end{proof}

\subsection{An analysis on locating particular solutions that can exhibit growth.}

To get the logarithmic growth of the theorem we shall restrict to a particular class of polygonal lines and we shall look for values of $\xi_n$ such that $|4\pi \frac t{n^2}\xi_n|$ is close to the distance between the corners. We note that in the case of a single corner we have $\alpha_1\alpha_{-1}=0 $ so the following Lemma ensures us that $\widehat{T_x}(\frac t{n^2},\xi_n)$ is bounded. Therefore the logarithmic growth comes from the interaction of several corners.

\begin{lemma}\label{corgrowth}Let $\{\alpha_j\}$ such that 
\begin{equation}\label{hyp}
\alpha_j=0 \mbox{ for }|j|>1.
\end{equation}
We have for all $t\in(0,\frac 1{4\pi n^2})$ and $|\delta|<\frac 1{n}$:
\begin{equation}\label{T^nearZ/tbis}
\left|\widehat{T_x}(t,\frac{1}{2\pi t}+\delta)-i\overline{\alpha_{-1}}\alpha_{1}\, \int_{|x-1|>\frac 1n,\,|x+1|>\frac 1n,\,|x|<2}\left(\frac{1}{x-1}-\frac{1}{x+1}\right)\,T(t,x)\,dx\right|
\end{equation}
$$\leq \frac{C(\|A_j(t)\|_{l^{1}})}{n\sqrt{t}}.$$
A similar estimate holds at $-\frac{1}{2\pi t}+\delta$.
\end{lemma}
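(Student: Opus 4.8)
The plan is to specialize the general formula from Lemma~\ref{growth} to the two-corner configuration \eqref{hyp}, where the only nonzero amplitudes are $A_{\pm 1}$ (up to the $R_j$-corrections), and to the frequency $\xi=\frac{1}{2\pi t}+\delta$ with $|\delta|<\frac1n$, which forces $4\pi t\xi$ to be close to $2$, the distance between the corners at $\pm 1$. First I would identify which index pairs $(j,r)$ with $|j-r+4\pi t\xi|<2nt$ actually contribute: since $j,r\in\{-1,1\}$ and $4\pi t\xi = 2+4\pi t\delta$ with $|4\pi t\delta|<\frac{4\pi t}{n}< \frac{1}{n^3}$, the quantity $j-r+4\pi t\xi$ equals (approximately) $2,0,4$ for $(j,r)=(1,1),(1,-1)$ is $2$; let me recompute: $j-r$ takes values $0$ (if $j=r$), $2$ (if $j=1,r=-1$), $-2$ (if $j=-1,r=1$). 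Adding $4\pi t\xi\approx 2$ gives $\approx 2, 4, 0$ respectively. Since $2nt<\frac{1}{2\pi n}<1$ for the range considered, the only surviving pair is $(j,r)=(-1,1)$, with $j-r+4\pi t\xi = -2+2+4\pi t\delta = 4\pi t\delta$, whose absolute value $|4\pi t\delta|<2nt$ holds. So the sum in Lemma~\ref{growth} collapses to the single term with $\overline{A_{-1}(t)}A_1(t)$, phase $e^{-i\frac{1-1}{4t}}=1$, and the integral over $\{|x+1|>\frac1n,\ |x-1|>\frac1n\}$ of $e^{i\frac{x\cdot 4\pi t\delta}{2t}}\bigl(\frac{1}{x+1-4\pi t\delta}-\frac{1}{x-1+4\pi t\delta}\bigr)T(t,x)\,dx$ — here I used that for $j=-1$ the denominator is $x-j-4\pi t\xi = x+1-4\pi t\xi \approx x+1-2$, wait, I must be careful with signs; in any case the two poles sit near $x=\pm 1$.

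Second I would replace the ``near'' quantities by their exact values with controlled error: $e^{i 2\pi x\delta}=1+O(|\delta||x|)$ and $4\pi t\xi = 2+O(t/n)$, so on any bounded $x$-region the oscillatory factor and the shifted poles can be replaced by $e^{i\cdot 0}=1$ and poles exactly at $\pm 1$, at the cost of errors that are $O(|\delta|)=O(1/n)$ times the (logarithmically divergent, but cut off at scale $1/n$) integral — and since the integral with the $\frac1n$-cutoffs is $O(\log n)$, these perturbative errors are $O(\frac{\log n}{n})$, which is absorbable into the stated $O\!\big(\frac{1}{n\sqrt t}\big)$ bound (recall $t<\frac{t_\theta}{n^2}$ so $\frac{1}{n\sqrt t}\gtrsim 1$, and also $\frac{\log n}{n}\lesssim \frac{1}{n\sqrt t}$ on the relevant time range). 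Third, I would truncate the integral to $|x|<2$: the tail $|x|\ge 2$ contributes $\int_{|x|\ge2}\bigl(\frac{1}{x-1}-\frac{1}{x+1}\bigr)e^{i2\pi x\delta}T(t,x)\,dx$, where $\frac{1}{x-1}-\frac{1}{x+1}=\frac{2}{x^2-1}=O(1/x^2)$ is integrable at infinity uniformly, so this piece is $O(1)$ — but I need it to be absorbable, so more carefully I would either keep it as part of the main term (the statement only claims an equality up to $\frac{C(\|A_j\|_{l^1})}{n\sqrt t}$, and an $O(1)$ error is \emph{not} obviously $\le \frac{C}{n\sqrt t}$... actually it is, since $\frac{1}{n\sqrt t}\ge \frac{1}{n}\cdot n/\sqrt{t_\theta} = 1/\sqrt{t_\theta}$, a constant, wait that needs $\sqrt t\le 1/n$ hence $\frac{1}{n\sqrt t}\ge 1$; yes on $t<\frac{1}{4\pi n^2}$ we have $n\sqrt t<\frac{1}{2\sqrt\pi}$ so $\frac{1}{n\sqrt t}>2\sqrt\pi$, a fixed constant, so indeed any $O(1)$ term is $\le \frac{C}{n\sqrt t}$). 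Finally I would replace $\overline{A_{-1}(t)}A_1(t)$ by $\overline{\alpha_{-1}}\alpha_1$ using \eqref{Aj}--\eqref{decayR}: the phase factors $e^{-i(|\alpha_j|^2-M)\log\sqrt t}$ have modulus one, and with $|\alpha_1|=|\alpha_{-1}|=\alpha$ and $M=2\alpha^2$ one checks $|\alpha_1|^2-M = -\alpha^2 = |\alpha_{-1}|^2 - M$, so the two logarithmic phases \emph{cancel} in the product $\overline{A_{-1}}A_1$, leaving $(\alpha+\overline{R_{-1}})(\alpha+R_1)=\alpha^2+O(t^\gamma)=\overline{\alpha_{-1}}\alpha_1+O(t^\gamma)$; multiplied by the $O(\log n)$ integral this error is $O(t^\gamma\log n)$, again $\le\frac{C}{n\sqrt t}$. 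The estimate at $-\frac{1}{2\pi t}+\delta$ is handled identically, now with $4\pi t\xi\approx -2$ and the surviving pair $(j,r)=(1,-1)$, giving $\overline{\alpha_1}\alpha_{-1}$ times the mirror integral, which by the change of variables $x\mapsto -x$ produces the conjugate-type statement claimed.

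The main obstacle I anticipate is bookkeeping the many small errors so that each is genuinely dominated by $\frac{C(\|A_j\|_{l^1})}{n\sqrt t}$ on the specific time window, rather than any single hard analytic estimate: the three sources — the error already furnished by Lemma~\ref{growth}, the perturbative replacement of $4\pi t\xi$ and $e^{i2\pi x\delta}$ by their limiting values against a $\log n$-sized integral, and the replacement of $A_{\pm1}(t)$ by $\alpha_{\pm1}$ — must all be checked against the fact that $\frac{1}{n\sqrt t}$ is bounded below by a constant here (so $O(1)$ and $O(t^\gamma\log n)$ errors are fine) while $\log n$-weighted $\frac1n$-errors are $o(1)$. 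A secondary point of care is the precise accounting of which $(j,r)$ pairs satisfy $|j-r+4\pi t\xi|<2nt$: one must verify $2nt<2$ (equivalently $t<\frac1n$, true) to rule out the $(j,r)=(-1,1)$-adjacent spurious pairs and, symmetrically at $\xi\approx-\frac{1}{2\pi t}$, isolate $(1,-1)$; this is where the hypothesis \eqref{hyp} that $\alpha_j=0$ for $|j|>1$ is essential, since it caps $|j-r|\le 2$.
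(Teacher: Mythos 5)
Your proposal follows the paper's proof in all essentials: specialize Lemma~\ref{growth} to $\xi=\frac{1}{2\pi t}+\delta$, observe that the localization $|j-r+4\pi t\xi|<2nt$ forces $r=j+2$, truncate the integral to a bounded region where the tail of $\frac{1}{x-1}-\frac{1}{x+1}=\frac{2}{x^2-1}$ is integrable, replace the perturbed poles and the factor $e^{i2\pi x\delta}$ by their limiting values against a $\log n$-sized integral, and pass from $A_{\pm1}(t)$ to $\alpha_{\pm1}$ via \eqref{decayR}, all the while using that $\frac{1}{n\sqrt t}$ is bounded below by a constant on $t<\frac{1}{4\pi n^2}$ so that $O(1)$ and $O(t^\gamma\log n)$ errors are absorbable. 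This is exactly the paper's route.

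There is, however, one bookkeeping gap. You restrict from the outset to index pairs $j,r\in\{-1,1\}$, but the hypothesis \eqref{hyp} only says $\alpha_j=0$ for $|j|>1$; by \eqref{Aj} the amplitudes $A_j(t)=e^{-i(|\alpha_j|^2-M)\log\sqrt t}(\alpha_j+R_j(t))$ are generically nonzero for every $j$, since the remainders $R_j(t)$ are not supported on $\{\pm1\}$. The constraint $|j-r+4\pi t\xi|<2nt$ therefore selects $r=j+2$ for \emph{all} $j\in\mathbb Z$, and the sum furnished by Lemma~\ref{growth} contains infinitely many terms $\overline{A_j(t)}A_{j+2}(t)$ in which at least one factor is a pure remainder. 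The paper controls these by
$$\sum_{|j|\leq 1}|\alpha_j||R_{j+2}(t)|\log n+\sum_{|j+2|\leq 1}|R_j(t)||\alpha_{j+2}|\log n+\sum_j|R_j(t)||R_{j+2}(t)|\log n,$$
each piece being $O(t^{\gamma}\log n)$ by \eqref{decayR} with $\gamma>\frac12$, hence absorbable into $\frac{C(\|A_j(t)\|_{l^1})}{n\sqrt t}$. The tools you already invoke (the $l^{2,s}$ decay of $R_j$) suffice to fill this in, but the step is genuinely needed and is absent as written. Relatedly, your closing remark that \eqref{hyp} ``caps $|j-r|\le 2$'' misattributes its role: the cap $r=j+2$ comes from the frequency localization alone, while \eqref{hyp} is what kills the $\alpha$-parts of all terms other than $(j,r)=(-1,1)$.
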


\begin{proof}
Denoting $\xi=\frac{1}{2\pi t}+\delta$ implies $4\pi t\xi=2+4\pi t\delta$ so we have
$$|j- r+ 4\pi t \xi|<2nt<\frac 1{2\pi n}\iff r=j+2,$$
and Lemma \ref{growth} gives us
\begin{equation}\label{fullint}\left|\widehat{T_x}(t,\xi)-i\sum_{j}\,\overline{A_j(t)}A_{j+2}(t)\, e^{i\frac{(j+2)^2-j^2}{4t}}\right.\end{equation}
$$\left.\times\int_{|x-j-2-4\pi t\delta|>\frac 1n,\,|x-j+4\pi t\delta|>\frac 1n}e^{ix 2\pi \delta}\left(\frac{1}{x-j-2-4\pi t\delta}-\frac{1}{x-j+ 4\pi t\delta}\right)\,T(t,x)\,dx\right|$$
$$\leq \frac{C(\|A_j(t)\|_{l^{1}})}{n\sqrt{t}}.$$
First we note that, as $|4\pi t\delta|<\frac 1n$, we obtain by Cauchy-Schwarz
$$\int_{|x-j-2-4\pi t\delta|>\frac 1n,\,|x-j+4\pi t\delta|>\frac 1n,\,|x-j-1|>2}\left|\frac{1}{x-j-2-4\pi t\delta}-\frac{1}{x-j+ 4\pi t\delta}\right|\,dx$$
$$=\int_{|y-1-4\pi t\delta|>\frac 1n,\,|y+1+4\pi t\delta|>\frac 1n,\,|y|>2}\left|\frac{2+8\pi t\delta}{(y-1-4\pi t\delta)(y+1+ 4\pi t\delta)}\right|\,dy\leq C$$
Thus we can reduce the integration in \eqref{fullint} to $|x-j-1|<2$. The remaining integrals are upper-bounded by $\log n$, and as $|x2\pi \delta|\in (0,\frac {4\pi}n)$ and $|4\pi t\delta|<\frac 1n$ we obtain
 $$\left|\widehat{T_x}(t,\xi)-i\sum_{j}\,\overline{A_j(t)}A_{j+2}(t)\, e^{i\frac{(j+2)^2-j^2}{4t}}\,\right.$$
 $$\times\left.\int_{|x-j-2|>\frac 1n,\,|x-j|>\frac 1n,\,|x-j-1|<2}\left(\frac{1}{x-j-2}-\frac{1}{x-j}\right)\,T(t,x)\,dx\right|\leq \frac{C(\|A_j(t)\|_{l^{1}})}{n\sqrt{t}}.$$
Now using the fact that $\alpha_j=0 \mbox{ for }|j|>1$ and again that the integrals are upper-bounded by $\log n$ we have
$$\left|\widehat{T_x}(t,\xi)-i\overline{\alpha_{-1}}\alpha_{1}\, \int_{|x-1|>\frac 1n,\,|x+1|>\frac 1n,\,|x|<2}\left(\frac{1}{x-1}-\frac{1}{x+1}\right)\,T(t,x)\,dx\right|\leq \frac{C(\|A_j(t)\|_{l^{1}})}{n\sqrt{t}}$$
$$+\sum_{|j|\leq 1} |\alpha_j||R_{j+2}(t)|\log n+\sum_{|j+2|\leq n} |R_j(t)||\alpha_{j+2}|\log n+\sum_j |R_j(t)||R_{j+2}(t)|\log n.$$
By using the decay \eqref{decayR} of $R_j(t)$ with $\gamma >\frac 12$, we get 
\begin{equation}\label{decayRbis}|R_j(t)|\leq \|R_k(t)\|_{l^{2,s}}\leq  C(\gamma) t^\gamma \|\alpha_k\|_{l^{2,s}}^3\leq \frac {C}{n}\|\alpha_k\|_{l^{2,s}}^3,\end{equation} 
thus the conclusion \eqref{T^nearZ/tbis} of the Lemma. 
For $\xi=-\frac{1}{2\pi t}+d$ we proceed the same.\\

\end{proof}

\subsection{Proof of the logarithmic growth}\label{subsectth}

 Lemma \ref{corgrowth} ensures us that for $\xi_n=\frac{n^2}{2\pi t}+\delta$ with $t\in(0,\frac 1{4\pi})$ and $|\delta|<\frac 1n$ we have:
 $$\left|\widehat{T_x}(\frac t{n^2},\xi_n)-i|\alpha|^2\, \int_{|x-1|>\frac 1n,\,|x+1|>\frac 1n,\,|x|<2}\left(\frac{1}{x-1}-\frac{1}{x+1}\right)\,T(\frac t{n^2},x)\,dx\right|\leq \frac{C(\|A_j(\frac t{n^2})\|_{l^{1}})}{\sqrt{t}}.$$
Now we note that we can further reduce to
 $$\left|\widehat{T_x}(\frac t{n^2},\xi_n)-i|\alpha|^2\, \int_{\frac 13>|x-1|>\frac 1n,\,\frac 13>|x+1|>\frac 1n}\left(\frac{1}{x-1}-\frac{1}{x+1}\right)\,T^n(\frac t{n^2},x)\,dx\right|\leq \frac{C(\|A_j(\frac t{n^2})\|_{l^{1}})}{\sqrt{t}}.$$
Therefore
 $$\left|\widehat{T_x}(\frac t{n^2},\xi_n)-i|\alpha|^2\, \int_{\frac 13>|x-1|>\frac 1n,\,\frac 13>|x+1|>\frac 1n}\left(\frac{1}{x-1}-\frac{1}{x+1}\right)\,T\left(\frac{t}{n^2},x\right)\,dx\right|\leq \frac{C(\|A_j(\frac t{n^2})\|_{l^{1}})}{\sqrt{t}}.$$
We recall also that from Lemma 4.1 in \cite{BVAnnPDE} we have the convergence
$$|T(\frac t{n^2},x)-T(0,x)|\leq C(\|\alpha_j\|_{l^{1,1}})(1+|x|)\sqrt{\frac t{n^2}}\left(\frac 1{d(x,\frac 12\mathbb Z)}+\frac 1{d(x,\mathbb Z)}\right),$$
so
 $$\left|\widehat{T_x}(\frac t{n^2},\xi_n)-i|\alpha|^2\, \int_{\frac 13>|x-1|>\frac 1n,\,\frac 13>|x+1|>\frac 1n}\left(\frac{1}{x-1}-\frac{1}{x+1}\right)\,T\left(0,x\right)\,dx\right|$$
$$\leq \frac{C(\|A_j(\frac t{n^2})\|_{l^{1}})}{\sqrt{t}}+C|\alpha|^3\sqrt{t}\log n.$$
As $\chi_0$ is a polygonal line with $T^{-\infty}, T^0$ and $T^{+\infty}$ the directions on $x<-1, -1<x<1$ and $1<x$ respectively, 
$$\int_{\frac 13>|x-1|>\frac 1n,\,\frac 13>|x+1|>\frac 1n}\left(\frac{1}{x-1}-\frac{1}{x+1}\right)\,T\left(0,x\right)\,dx$$
$$=T^{-\infty}\left[\log\frac{|x-1|}{|x+1|}\right]_{-\frac 43 }^{-1-\frac 1n}+T^0\left[\log\frac{|x-1|}{|x+1|}\right]_{-1-\frac 1n}^{-\frac 23 }+T^0\left[\log\frac{|x-1|}{|x+1|}\right]_{\frac 23 }^{1-\frac 1n}+ T^{+\infty}\left[\log\frac{|x-1|}{|x+1|}\right]_{1+\frac 1n}^{\frac 43 },$$
and a logarithmic growth in $n$ comes from the boundary terms at $-1-\frac 1n,-1+\frac 1n,1-\frac 1n$ and $1+\frac 1n$. 
Therefore we obtain
$$\left|\widehat{T_x}(\frac t{n^2},\xi_n)- i|\alpha|^2(T^{-\infty}-2T^0+T^{+\infty})\log n\right|\leq \frac{C(\|A_j(\frac t{n^2})\|_{l^{1}})}{\sqrt{t}}+C|\alpha|^3\sqrt{t}\log n.$$
Now we compute:
\begin{equation}\label{smallnessvector}
|T^{-\infty}-2T^0+T^{+\infty}|=2(1-\cos(\pi-\theta)).
\end{equation}
The smallest values of this modulus appears for $\theta$ close to $\pi$, equivalent to $\alpha$ close to zero im view of \eqref{angle}:
$$1-\frac{(\pi-\theta)^2}8\overset{\theta\rightarrow \pi}{\approx}\cos\frac{\pi-\theta}2=\sin\frac \theta 2=e^{-\frac\pi 2|\alpha|^2}\overset{\alpha\rightarrow 0}{\approx}1-\frac\pi 2|\alpha|^2,$$
thus
$$|T^{-\infty}-2T^0+T^{+\infty}|=2(1-\cos(\pi-\theta))\overset{\theta\rightarrow \pi}{\approx}(\pi-\theta)^2\approx 4\pi|\alpha|^2.$$
We then choose $t$ small enough such that
$$C|\alpha|^3\sqrt{t}<\frac{1}8|\alpha|^2|T^{-\infty}-2T^0+T^{+\infty}|,$$
that reduces to 
$$\sqrt t<C|\alpha|,$$
and large enough such that
$$\frac{C(\|A_j(\frac t{n^2})\|_{l^{1}})}{\sqrt{t}}<\frac{1}8|\alpha|^2|T^{-\infty}-2T^0+T^{+\infty}|\log n,$$
that is implied, in view of \eqref{decayRbis} by 
$$\frac{C(|\alpha|+\frac{|\alpha|^3}{n})}{|\alpha|^4\log n}<\sqrt t.$$
Therefore the conditions on $t$ and $n$ with respect to $\alpha$ are
\begin{equation}\label{condtn}
\frac{C(|\alpha|+\frac{|\alpha|^3}{n})}{|\alpha|^4\log n}<\sqrt t<\min\{C|\alpha|,\frac 1{4\pi}\}.
\end{equation}
We note that the upper and lower condition on $t$ imply that $n$ has to be chosen large with respect to $\alpha$. \\

Summarizing we have obtained the existence of  $t_\theta,\tilde t_\theta\in (0,1)$ and $n_\theta\in\mathbb N$ such that for $n\in\mathbb N, n\geq n_\theta$ and $t\in(\frac{\tilde t_\theta}{\log^2 n},t_\theta)$ the following growth holds
$$|\widehat{T_x}(\frac t{n^2},\frac{n^2}{2\pi t}+\delta) -\, i|\alpha|^2(T^{\-\infty}-2T^0+T^{+\infty})\log n)|\leq \frac12|\alpha|^2|T^{-\infty}-2T^0+T^{+\infty}|\log (n),\,\forall \delta, |\delta|<\frac1{n}.$$
This yields  \eqref{Tgrowthcorners} in Theorem \ref{th}. For the analysis at $-\frac{n^2}{2\pi t}+\delta$ we proceed the same way. \\

As a consequence we get the existence of $C_\theta>0$ such that for $n\geq n_\theta$ and $t\in(\frac{\tilde t_\theta}{n^2},\frac{t_\theta}{n^2})$ we have
$$\sup_{\xi}|\widehat{T_x}(t,\xi)|\geq C_\theta \log n.$$
We can choose $n_\theta$ large enough such that for all $\tau\in (\tilde t_\theta,t_\theta)$ 
$$|\log \tau|<\frac 12\left |\log \frac{ t_\theta}{n_\theta^2}\right|<\frac 12\left |\log \frac{\tau}{n_\theta^2}\right|.$$
Then, for $t=\frac{\tau}{n^2}\in(\frac{\tilde t_\theta}{n^2},\frac{t_\theta}{n^2})$
$$\sup_{\xi}|\widehat{T_x}(t,\xi)|\geq \frac{C_\theta}2( |\log t|-|\log\tau|) \geq \frac{C_\theta}4 \log t.$$
By choosing moreover $n_\theta$ large enough such that 
$$\frac{t_\theta}{(n+1)^2}>\frac{\tilde t_\theta}{n^2},$$
we have that $(0,\frac{t_\theta}{n_\theta^2})=\cup_{n\geq n_\theta}(\frac{\tilde t_\theta}{n^2},\frac{t_\theta}{n^2})$ so \eqref{logt} holds.

\subsection{Bounds away from the growth zone}\label{sectbound}

Finally we consider for $n\in\mathbb N^*$  those  $\xi$ such that $|\xi-\frac{n^2}{2\pi t}|\geq \frac{3n^2}{2\pi t}$ and $|\xi+ \frac{n^2}{2\pi t}|\geq \frac{3n^2}{2\pi t}$, which means $|4\pi \frac t{n^2}\xi-2|\geq \frac 34$ and $|4\pi \frac t{n^2}\xi+2|\geq \frac 34$. We denote $m\in\frac{\mathbb Z}2$ and $d\in[-\frac 12,\frac 12[$ the numbers such that $4\pi \frac t{n^2}\xi=2m+d$, that in particular implies $m\notin\{\pm 1\}$. Then 
$$|j- r+ 4\pi \frac t{n^2}\xi|<2n\frac t{n^2}\iff r=j+2m,$$
and Lemma \ref{growth} gives us for $t\in (0,\frac1{4\pi })$
$$|\widehat{T_x}(\frac t{n^2},\xi)|\leq \sum_{j}|A_j(\frac t{n^2})||A_{j+2m}(\frac t{n^2})|\int_{ |x-j-2m-d|>\frac 1n,\,|x-j+d|>\frac 1n}\left|\frac{1}{x-j-2m-d}-\frac{1}{x-j+d}\right|\,dx$$
$$+\frac{C(\|A_j(\frac t{n^2})\|_{l^{1}})}{\sqrt{t}}.$$
The piece of integration $|x-j-m|>4m$ is bounded and the remaining part is upper-bounded by $C \log (\max\{\langle m\rangle, n\})$. Since $m\notin\{\pm 1\}$ then
$$\sum_{j}|A_j(\frac t{n^2})||A_{j+2m}(\frac t{n^2})|\leq \sum_{|j|\leq 1}|\alpha_j||R_{j+2m}(\frac t{n^2})|+\sum_{|j+2m|\leq 1}|R_j(\frac t{n^2})||\alpha_{j+2m}|+\sum_{j}|R_j(\frac t{n^2})||R_{j+2m}(\frac t{n^2})|$$
$$=|\alpha|(|R_{-1+2m}(\frac t{n^2})|+|R_{1+2m}(\frac t{n^2})|+|R_{-1-2m}(\frac t{n^2})|+|R_{1-2m}(\frac t{n^2})|)+\sum_{j}|R_j(\frac t{n^2})||R_{j+2m}(\frac t{n^2})|.$$
From \eqref{decayR} we have:
$$\sum_j |j|^{2s}|R_j(\frac t{n^2})|^2\leq C\frac{|\alpha|^3}{n},$$
so for all $j\in\mathbb Z$ 
$$ |R_j(\frac t{n^2})| \leq \frac{C|\alpha|^3}{\sqrt{n}\langle j\rangle^s}.$$
Thus
$$|\widehat{T_x}(\frac t{n^2},\xi)|\leq C(\alpha)\log(\max\{\langle m\rangle, n\}) \frac {1}{\sqrt{n}\langle m\rangle^s} +\frac{C(\alpha)}{\sqrt{t}}.$$
Therefore for $\xi$ such that $|\xi-\frac{n^2}{2\pi t}|\geq \frac{3n^2}{2\pi t}$ and $|\xi+ \frac{n^2}{2\pi t}|\geq \frac{3n^2}{2\pi t}$ and $t\in (0,\frac 1{4\pi})$ we obtain
$$|\widehat{T_x}(\frac t{n^2},\xi)|\leq \frac{C(\alpha)}{\sqrt{t}},$$
and the proof of Theorem \ref{th} is completed. 

\section{Several corners}\label{sectN}
Instead of $2$ corners we consider a planar polygonal line with $2N$ corners located at $-2N+1,...,2N-1$, with same angle $\theta$. Let $m\in\{1,...,N\}$. Proceeding similarly as in the proof of Lemma \ref{corgrowth} we obtain (with constants that can depend on $N$) for $|\delta|<\frac 1n$:
$$\left|\widehat{T_x}(\frac t{n^2},\frac{mn^2}{2\pi t}+\delta)-i|\alpha|^2\sum_{j\in\{(-2N+1),..,(2N-2m-1)\}}\, \right.$$
$$\left.\times e^{i\frac{(j+2m)^2-j^2}{4\frac t{n^2}}} \int_{\frac 1n<|x-(j+2m)|<\frac 13,\,\frac 1n<|x-j|<\frac 13}\left(\frac{1}{x-(j+2m)}-\frac{1}{x-j}\right)\,T(t,x)\,dx\right|\leq \frac{C(\|A_j(\frac t{n^2})\|_{l^{1}})}{\sqrt{t}}.$$
Now we restrict to $t$ and $n$ such that $\frac {n^2}{t}\in 8\pi\mathbb Z$ to get rid of the phases in front of the integrals. Arguing as for the end of the proof of Theorem \ref{th} in \S \ref{subsectth}, we get
$$\left|\widehat{T_x}(\frac t{n^2},\frac{mn^2}{2\pi t}+\delta)-V_m\log n\right|\leq \frac{C(\|A_j(t)\|_{l^{1}})}{\sqrt{t}}+C|\alpha|^3\sqrt{t}\log n.$$
where
\begin{equation}\label{Vm}V_m=i(-\frac 2\pi) \log (\sin \frac {\theta}{2})\sum_{j\in\{(-2N+1),..,(2N-2m-1)\}}\, (T(j^-)-T(j^+)-T((j+2m)^-)+T((j+2m)^+))\end{equation}
$$=i(-\frac 2\pi) \log (\sin \frac {\theta}{2})(T((-2N+1)^-)-T((-2N+1+2m)^-) -T((2N-1-2m)^+)+T((2N-1)^+)$$
For $N=1$ we recover the vector $V$ in Theorem \ref{th}, as
$$V_1=i(-\frac 2\pi) \log (\sin \frac {\theta}{2})(T((-2N+1)^-)-T((-2N+1)^+) -T((2N-1)^-)+T((2N-1)^+).$$
We note that for at least one $m\in\{1,...,N\}$ we have $V_m\neq 0$.\\

Continuing similarly as in \S\ref{subsectth}-\ref{sectbound} we obtain the following result.

\begin{theorem}\label{thN}
Consider a polygonal line $\chi_0(x)$ with $2N$ corners of angle $\theta$ located at $x\in\{-2N+1,...,2N-1\}$. Let $\chi(t,x)$ be its evolution by the binormal flow by the Hasimoto method and denote $T(t,x)$ the tangent vector. 

There exists  $t_{\theta,N},\tilde t_{\theta,N}\in (0,1)$ and $n_{\theta,N}\in\mathbb N$ such that for $n\in\mathbb N, n\geq n_{\theta,N}$, for $t\in(\frac{\tilde t_{\theta,N}}{n^2\log^2 n},\frac{t_{\theta,N}}{n^2})$ satisfying $\frac 1t\in 8\pi\mathbb Z$, for $m\in\{1,...,N\}$ and for $\xi$ satisfying either $|\xi- \frac {m}{2\pi t}|\leq \frac 1n$ or $|\xi+ \frac {m}{2\pi t}|\leq \frac 1n$, the following holds
\begin{equation}\label{TgrowthNcorners}|\widehat{T_x}(t,\xi)-V_m\,\log (n) |\leq \frac 12 |V_m|\log n,\end{equation}
where $V_m$ is defined in \eqref{Vm}.

As a consequence, for $t\in(0,\frac{t_{\theta,N}}{n_{\theta,N}^2})$ there exists $C_{\theta,N}>0$ such that
\begin{equation}\label{logtN}\sup_{\xi}|\widehat{T_x}(t,\xi)|\geq C_{\theta,N} \log t.\end{equation}

Finally for $\xi$ such that  $|\xi- \frac{m}{2\pi t}|\geq  \frac{3}{8\pi t}$ and $|\xi+ \frac{m}{2\pi t}|\geq  \frac{3}{8\pi t}$ for all $m\in\{1,...,N\}$,  and $t\in(0,\frac{1}{4\pi })$  we have an upper-bound of $|\widehat{T_x}(t,\xi)|$ depending only on $\theta$ and $N$.

\end{theorem}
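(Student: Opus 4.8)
The plan is to repeat, almost verbatim, the three-step argument of \S\ref{sectth}, now keeping track of the $2N$ corners. Since these give only finitely many nonzero amplitudes $\alpha_j=\alpha$, the hypothesis $\{\alpha_j\}\in l^{2,\frac 12^+}$ of \cite{BVAnnPDE} holds trivially, the solution of \eqref{NLSbis} of the form \eqref{ansatzbis} with the decay \eqref{decayR} exists on an interval $(0,\mathcal T)$, $\chi(t)$ is built from it by Hasimoto's method, and $T(t,x)\to T(0,x)$ with the rate of Lemma~4.1 of \cite{BVAnnPDE}. First I would localise the growth: applying Lemma~\ref{growth} with $t$ replaced by $t/n^2$ and $\xi$ near $mn^2/(2\pi t)$ for a fixed $m\in\{1,\dots,N\}$, the constraint $|j-r+4\pi(t/n^2)\xi|<2t/n$ forces $r=j+2m$, so only pairs of corners at distance $2m$ survive. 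Exactly as in Lemma~\ref{corgrowth}, the tail of the singular kernel is integrable away from the corners and every term containing an $R_j$ is $O(t^\gamma\log n)=O(n^{-1}\log n)$ by \eqref{decayR}; what is left is, up to $C(\|A_j(t/n^2)\|_{l^1})/\sqrt t$,
$$\widehat{T_x}\left(\frac{t}{n^2},\xi\right)\approx i|\alpha|^2\sum_{j}e^{i\frac{(j+2m)^2-j^2}{4t/n^2}}\int_{\frac1n<|x-(j+2m)|<\frac13,\ \frac1n<|x-j|<\frac13}\left(\frac{1}{x-(j+2m)}-\frac{1}{x-j}\right)T\left(\frac{t}{n^2},x\right)dx,$$
the sum being over those $j$ for which both $j$ and $j+2m$ are corners.

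The genuinely new point, absent when $N=1$, is the oscillatory prefactor $e^{i((j+2m)^2-j^2)n^2/(4t)}=e^{i\,m(j+m)n^2/t}$: for a single pair it has modulus one and is harmless, but here the different $j$-contributions would otherwise cancel. I would therefore restrict to times with $n^2/t\in 8\pi\mathbb Z$ (this is the condition $1/t\in 8\pi\mathbb Z$ of the statement, $t$ there being the physical time $t/n^2$), which makes each prefactor equal to $1$. Replacing then $T(t/n^2,x)$ by $T(0,x)$ at the cost of $C|\alpha|^3\sqrt t\log n$, each pair $(j,j+2m)$ contributes, by the elementary evaluation of $\int(\frac1{x-(j+2m)}-\frac1{x-j})T(0,x)\,dx$ over the two disjoint annuli $\frac1n<|x-j|<\frac13$ and $\frac1n<|x-(j+2m)|<\frac13$, exactly the boundary terms $\big(T(j^-)-T(j^+)-T((j+2m)^-)+T((j+2m)^+)\big)\log n+O(1)$. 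Writing $s_0,\dots,s_{2N}$ for the $2N+1$ segment directions of $\chi_0$ (so $s_0=T((-2N+1)^-)$, $s_{2N}=T((2N-1)^+)$, and $T(p^+)=T(q^-)$ for consecutive corners $p<q$), the sum over $j$ telescopes to $i(-\frac2\pi)\log(\sin\frac\theta2)\,(s_0-s_m-s_{2N-m}+s_{2N})=V_m$, which for $N=1,\,m=1$ is $V$. If $\theta\neq\pi$ not all $V_m$, $m\in\{1,\dots,N\}$, can vanish: $V_N=0$ would make $s_N$ the midpoint of the unit vectors $s_0,s_{2N}$, hence $s_0=s_N=s_{2N}$, and then the vanishing of $V_{N-1},\dots,V_1$ would force, step by step, all the $s_k$ equal, i.e. $\chi_0$ straight. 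Fixing $m$ with $V_m\neq0$ and balancing, exactly as in \eqref{condtn}, the errors $C(\|A_j\|_{l^1})/\sqrt t$ and $C|\alpha|^3\sqrt t\log n$ against $\frac12|V_m|\log n$ by a lower and an upper restriction on $t$ (which force $n$ to be large) yields \eqref{TgrowthNcorners}; the frequencies $\xi=-mn^2/(2\pi t)+\delta$ are handled the same way.

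The consequence \eqref{logtN} follows from the covering argument at the end of \S\ref{subsectth}: choosing $n_{\theta,N}$ so large that the intervals $(\tilde t_{\theta,N}/n^2,t_{\theta,N}/n^2)$ overlap, every admissible small $t$ (those with $1/t\in 8\pi\mathbb Z$, and then all small $t$ using the continuity of $t\mapsto\widehat{T_x}(t,\cdot)$ away from $t=0$ granted by \eqref{decayR}) sits in one of them for an $n$ with $\log n\gtrsim|\log t|$, so $\sup_\xi|\widehat{T_x}(t,\xi)|\geq C_{\theta,N}\log t$. Finally, the bound away from the growth zone goes as in \S\ref{sectbound}: if $\xi$ is at distance $\geq 3/(8\pi t)$ from every $\pm m/(2\pi t)$ (and, more precisely, from every resonant frequency produced by a corner-pair), Lemma~\ref{growth} forces a resonant shift $2m'$ for which no pair $(j,j+2m')$ consists of two corners, so the remaining sum only involves products $\alpha R_j$ or $R_j R_{j'}$, which are $O(n^{-1/2})$ by \eqref{decayR}, leaving a bound $C(\theta,N)/\sqrt t$. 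The hard part is exactly the phase cancellation treated in the second paragraph: making several corner-pairs resonate at the same frequency forces the arithmetic restriction $1/t\in 8\pi\mathbb Z$, and it is this that makes the extension of \eqref{logtN} to \emph{all} small $t$ rely on the (mild) time-continuity of $\widehat{T_x}$.
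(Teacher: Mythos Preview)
Your proposal follows essentially the same route as the paper: apply Lemma~\ref{growth} to isolate the resonant pairs $r=j+2m$, restrict to times with $1/t\in 8\pi\mathbb Z$ to kill the phases $e^{i((j+2m)^2-j^2)n^2/(4t)}$, replace $T(t/n^2,x)$ by $T(0,x)$, evaluate the singular integral to extract the telescoped vector $V_m$, and then repeat the balancing and covering arguments of \S\ref{subsectth}--\ref{sectbound}. Your identification of the phase alignment as the one genuinely new step is exactly right, and your inductive argument that not all $V_m$ vanish (via the unit-vector midpoint trick) is more detailed than the paper's bare assertion. Your parenthetical remark that the upper bound really requires staying away from \emph{every} resonant frequency produced by a corner pair (i.e.\ also $m\in\{N+1,\dots,2N-1\}$), and your observation that passing from \eqref{TgrowthNcorners} on the discrete set $\{1/t\in 8\pi\mathbb Z\}$ to \eqref{logtN} for all small $t$ needs a mild time-continuity input, both flag points the paper's terse ``continuing similarly'' leaves implicit.
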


{\bf{Acknowledgements:}}  This research is partially supported by the Institut Universitaire
de France, by the French ANR project SingFlows, by ERCEA Advanced Grant 2014 669689
- HADE, by MEIC (Spain) projects Severo Ochoa SEV-2017-0718, and PGC2018-1228
094522-B-I00, and by Eusko Jaurlaritza project IT1247-19 and BERC program.


\begin{thebibliography}{99999}

\bibitem{AKNS} M.J.~Ablowitz, D.J.~Kaup, A.C.~Newell, and H.~Segur,
The inverse scattering transform-Fourier analysis for nonlinear problems. 
{\em Studies in Appl. Math.} {\bf 53} (1974), 249--315.

\bibitem{ArHa} 
R.J.~Arms and F.R.~Hama, 
Localized-induction concept on a curved vortex and motion of an elliptic vortex
ring,
{\em Phys. Fluids }{\bf 8} (1965), 553--560.





\bibitem{BV4}  V.~Banica and L.~Vega, 
{\it  The initial value problem for the binormal flow with rough data,} 
Ann. Sci. \'Ec. Norm. Sup\'er. {\bf 48} (2015), 1421--1453.


\bibitem{BVAnnPDE}  
V.~Banica and L.~Vega, 
Evolution of polygonal lines by the binormal flow, 
{\it  Ann. PDE} {\bf 6} (2020), Paper No. 6, 53 pp.

\bibitem{BVCPDE}  
V.~Banica and L.~Vega, 
On the energy of critical solutions of the binormal flow, 
{\it  Comm. PDE} {\bf 45} (2020), 820--845.



\bibitem{Bo}  
J.~Bourgain,
 Fourier transform restriction phenomena for certain lattice subsets and applications to nonlinear evolution equations  I. Schr\"odinger equations,
 {\it Geom. Funct. Anal.} {\bf 3} (1993),107--156.

\bibitem{Bu}
T. F.~Buttke,
A numerical study of superfluid turbulence in the self-induction approximation,
{\ em J. Comput. Phys.} {\bf76} (1988), 301--326.

 \bibitem{CaKa}
 R.~Carles and T.~Kappeler, 
 Norm-inflation with infinite loss of regularity for periodic NLS equations in negative Sobolev spaces,
{\em Bull. Soc. Math. France} {\bf 145} (2017), 623--642.
 
 \bibitem{CaTi}
 A.J.~Callegari and L.~Ting, 
 Motion of a curved vortex filament with decaying vertical core and axial velocity, 
 {\it SIAM. J. Appl. Math.} {\bf 35} (1978), 148--175.
 
\bibitem{CaWe} 
T.~Cazenave and F.B.~Weissler,
The Cauchy problem for the critical nonlinear Schr\"odinger equation in $H\sp s$, 
{\em Nonlinear Anal., Theory Methods Appl.} {\bf 14} (1990), 807--836.

\bibitem{Ch} 
M.~Christ, 
Power series solution of a nonlinear Schr\"odinger equation, 
Mathematical aspects of nonlinear dispersive equations, 131-155, Ann. of Math. Stud., 163, Princeton Univ. Press, Princeton, NJ, 2007.

\bibitem{ChCoTa}
M.~Christ, J.~Colliander and T.~Tao,
Asymptotics, frequency modulation, and low regularity ill-posedness for canonical defocusing equations,
{\em Am. J. Math. } {\bf 125} (2003), 1235--1293.

\bibitem{CKSTT} 
J.~Colliander, M.~Keel, G.~Staffilani, H.~Takoaka, and T.~Tao, 
Transfer of energy to high frequencies in the cubic defocusing nonlinear Schr\"odinger equation,
{\em Invent. Math.} {\bf181} (2010), 39--113.

 \bibitem{CFM}
 P. ~Constantin, C. ~Fefferman and A. J.~Majda,
 Geometric constraints on potentially singular solutions for
              the {$3$}-{D} {E}uler equations,
{\it Comm. Partial Differential Equations} {\bf 21} (1996), 559--571.

 \bibitem{DaR} 
L.S.~Da Rios,  
On the motion of an unbounded fluid with a vortex filament of any shape,  
{\em Rend. Circ. Mat. Palermo} {\bf 22} (1906), 117--135.

\bibitem{DHV2} 
F.~de la Hoz and L.~Vega, 
On the relationship between the one-corner problem and the
              {$M$}-corner problem for the vortex filament equation,
{\em J. Nonlinear Sci.}
{\bf 28} (2018),  {2275--2327}.

\bibitem{GeGR}
P.~G\'erard and S.~Grellier,
The cubic Szeg\H o equation,
{\em Ast\'erisque} {\bf 248} (2017), 126 p.

\bibitem{GK} 
M.~Guardia and V.~Kaloshin. 
Growth of Sobolev norms in the cubic defocusing nonlinear Schr\"odinger equation,
{\em J. Eur. Math. Soc.} {\bf17} (2015), 71--149.

\bibitem{GLOR}P.~G\'erard, E.~Lenzmann, O.~Pocovnicu and P.~Rapha\"el
A two-soliton with transient turbulent regime for the cubic half-wave equation on the real line. 
{\em Ann. PDE }{\bf 4} (2018), no. 1, Paper No. 7, 166 pp. 

  \bibitem{GRV} S.~Guti\'errez, J.~Rivas and L.~Vega, 
  Formation of singularities and self-similar vortex motion under the localized induction approximation, 
  {\it Commun. PDE} {\bf 28} (2003) 927--968. 

\bibitem{GrGuPa} F.F.~Grinstein, E.~Gutmark and T.~Parr, 
Near field dynamics of subsonic free square jets. A computational and experimental study, 
{\it Phys. Fluids} {\bf 7} (1995), 1483--1497.

\bibitem{Gr} 
A.~Gr\"unrock,
Bi- and trilinear Schr\"odinger estimates in one space dimension with applications to cubic NLS and DNLS, 
{\em Int. Math. Res. Not.} {\bf 41} (2005), 2525--2558. 

\bibitem{GrH} 
A. Gr\"unrock, and S.~Herr. 
Low regularity local well-posedness of the derivative nonlinear Schr\"odinger equation with periodic initial data,
{\em SIAM J. Math. Anal.} {\bf 39} (2008)1890--1920.



\bibitem{GV1} 
S.~Gutierrez and L.~Vega, 
Self-similar solutions of the localized induction approximation: singularity formation, 
{\em Nonlinearity }{\bf17} (2004), 2091--2136.

\bibitem{GV2} 
S.~Gutierrez and L.~Vega, 
On the stability of self-similar solutions of 1D cubic Schr\"odinger equations, 
{\em Math. Ann.} {\bf356} (2013), 259--300.

\bibitem{Hani}
Z.~Hani, 
Long-time instability and unbounded Sobolev orbits for some periodic nonlinear Schr\"odinger equations,
{\em Arch. Ration. Mech. Anal.} {\bf211} (2014), 929--964.
 
\bibitem{HPTV}
Z.~Hani, B.~ Pausader, N.~Tzvetkov and N.~Visciglia, 
{Modified scattering for the cubic {S}chr\"{o}dinger equation on
              product spaces and applications},
{\it Forum Math. Pi} {\bf 3} (2015), {63p}.

\bibitem{HGKV} B.~Harrop-Griffiths, R.~Killip and M.~Visan,
Sharp well-posedness for the cubic NLS and mKdV in $H^s(\mathbb R)$,
{it ArXiv 2003.05011.}
 
\bibitem{Ha}
H.~Hasimoto,
A soliton in a vortex filament, 
 {\it J. Fluid Mech.} {\bf 51} (1972), 477--485.
 

 \bibitem{JeSe}  R.~L.~Jerrard et C.~Seis, 
{\it  On the vortex filament conjecture for Euler flows,}
Arch. Ration. Mech. Anal. {\bf 224} (2017), 135--172, \href{https://arxiv.org/pdf/1603.00227.pdf}{pdf}.


\bibitem{JeSm2}
R.~L.~Jerrard and D.~Smets,
On the motion of a curve by its binormal curvature,
{\em J. Eur. Math. Soc.} {\bf 17} (2015), 1148--1515. 

  \bibitem{KPV}
C. Kenig, G. Ponce and L. Vega,
On the ill-posedness of some canonical non-linear dispersive equations, 
 {\em Duke Math. J.} {\bf 106}  (2001), 716--633.

\bibitem{KiViZh} 
R.~Killip, M.~Visan and X.~Zhang,
Low regularity conservation laws for integrable PDE, 
{\em Geom. Funct. Anal.} {\bf 28} (2018), 1062--1090.

\bibitem{Ki}
N.~Kishimoto, 
Well-posedness of the Cauchy problem for the Korteweg-de Vries equation at the critical regularity, 
{\em Differential Integral Equations} {\bf 22} (2009), 447--464.

\bibitem{Kita}
N. Kita, 
Mode generating property of solutions to the nonlinear Schr\"odinger equations in one space dimension, Nonlinear dispersive equations, 
{\em GAKUTO Internat. Ser. Math. Sci. Appl., Gakkotosho, Tokyo} {\bf 26} (2006), 111--128.

\bibitem{KoTa}
H.~Koch and D.~Tataru,
Conserved energies for the cubic NLS in 1-d, 
{\em Duke Math. J.} {\bf 167} (2018), 3207--3313.

\bibitem{LD} 
M.~Lakshmanan and M.~Daniel,
On the evolution of higher dimensional Heisenberg continuum spin systems.
{\em  Physica A}  {\bf 107} (1981), 533--552.

\bibitem{LRT} 
M.~Lakshmanan, T.~W.~Ruijgrok, and C.~J.~Thompson,
On the the dynamics of a  continuum spin system,
{\em Physica A }{\bf 84} (1976), 577--590.

\bibitem{Li1} T.~Lipniacki, 
Quasi-static solutions for quantum vortex motion under the localized induction approximation,
{\em J. Fluid Mech.} {\bf477} (2003), 321--337.

\bibitem{Li2} T.~Lipniacki,
Shape-preserving solutions for quantum vortex motion under localized induction approximation, 
{\em Phys. Fluids} {\bf15} (2003) 1381--1395.


 \bibitem{MuTaUkFu}
Y.~Murakami, H.~Takahashi, Y.~Ukita and S.~Fujiwara, 
On the vibration of a vortex filament, 
{\em Appl. Phys. Colloquium} (1937), 1--5.

\bibitem{Oh}
T.~Oh, 
A remark on norm inflation with general initial data for the cubic nonlinear Schr\"odinger equations in negative Sobolev spaces,
{\em  Funkcial. Ekvac.} {\bf 60} (2017), 259--277.

\bibitem{OhWa}
T.~Oh and Y.~Wang, 
Global well-posedness of the one-dimensional cubic nonlinear Schr\"odinger equation in almost critical spaces, 
{\em  J. Differential Equations} {\bf 269} (2020), 612--640.

\bibitem{Oz}
T.~Ozawa,
Long range scattering for nonlinear Schr\"odinger equations in one space dimension. 
{\em Comm. Math. Phys.} {\bf139} (1991), no. 3, 479?493. 

\bibitem{Ta}
L.A.~Takhtadzhyan,
Integration of the continuous Heisenberg spin chain through the inverse scattering method,
{\em Phys. Lett. A} {\bf 64} (1977), 235--237.

\bibitem{Ts}
Y. ~Tsutsumi,
$L^2$-solutions for nonlinear Schr\"odinger equations and nonlinear groups, 
{\em Funk- cial. Ekvac.}, {\bf 30} (1987), 115--125. 

\bibitem{VaVe} 
A.~Vargas and L.~Vega, 
Global wellposedness of 1D cubic nonlinear Schr\"odinger equation for data with infinity $L^2$ norm, 
{\em J. Math. Pures Appl.} {\bf 80} (2001), 1029--1044.

\bibitem{ZM}
 V.E.~Zakharov and S.V.~Manakov,
On the complete integrability of a nonlinear Schrödinger equation,
{\em Theoret. and Math. Phys.} {\bf 19} (1974), 551--559. 

 \bibitem{ZS}
 V.E.~Zakharov and A.B.~Shabat, 
 Exact theory of two-dimensional self-focusing and one-dimensional self-modulation of waves in nonlinear media, 
 {\em \v{Z}. \`Eksper. Teoret. Fiz.} {\bf 61} (1971), 118--134.
 
 
  \bibitem{ZT}
  V.E.~Zakharov and L.A.~Takhtadzhyan,
 Equivalence of the nonlinear Schrödinger equation and the equation of a Heisenberg ferromagnet,
 {\em Theor. Math. Phys.} {\bf 38} (1979), 17--23.

 \end{thebibliography}
\end{document}